\documentclass[12pt]{amsart}
\newtheorem{thm}{Theorem}[section]
\newtheorem{lem}[thm]{Lemma}

\newtheorem{prop}[thm]{Proposition}

\theoremstyle{remark}

\newtheorem*{prfofprop}{Proof of Proposition~\ref{Horprop}}

\newtheorem*{acknowledgement}{Acknowledgment}
\makeatletter
 
 \@addtoreset{equation}{section}

\makeatother

\title{Slope equality of non-hyperelliptic Eisenbud--Harris special fibrations of genus $4$}
\author{Makoto Enokizono}
\subjclass[2010]{14D06}
\thanks{
	{\bf Keywords:}
fibered surface, local signature}
\address{Makoto Enokizono,
	Department of Mathematics,
	Graduate School of Science,
	Osaka University,
	Toyonaka, Osaka 560-0043, Japan}
\email{m-enokizono@cr.math.sci.osaka-u.ac.jp}
\usepackage[top=35truemm,bottom=35truemm,left=25truemm,right=25truemm]{geometry}
\usepackage{amsmath,cases}
\usepackage{amsfonts}
\usepackage[all]{xy}
\usepackage{here}
\usepackage{latexsym}
  
  \pagestyle{plain}
\begin{document}
\maketitle

\begin{abstract}
The Horikawa index and the local signature are introduced for relatively minimal fibered surfaces whose general fiber is a non-hyperelliptic curve of genus $4$ with unique trigonal structure.
\end{abstract}

\section*{Introduction}

Let $S$ (resp.\ $B$) be a non-singular projective surface (resp.\ curve) defined over $\mathbb{C}$ and $f\colon S\to B$ a relatively minimal fibration whose general fiber $F$ is a non-hyperelliptic curve of genus $4$.
According to \cite{AsYo}, we say that $f$ is {\em Eisenbud--Harris special} or {\em E-H special} for short (resp.\ {\em Eisenbud--Harris general}) if $F$ has a unique $\mathfrak{g}^{1}_{3}$ (resp.\ two distinct $\mathfrak{g}^{1}_{3}$'s), or equivalently, the canonical image of $F$ lies on a quadric surface of rank $3$ (resp.\ rank $4$) in $\mathbb{P}^3$.

For E-H general fibrations of genus $4$, two important local invariants, the local signature and the Horikawa index are introduced in the appendix in \cite{AsYo}.
The purpose of this short note is to show that an analogous result also holds for E-H special fibrations of genus $4$, i.e., to show the following:

\begin{thm} \label{Horthm}
Let $\mathcal{A}$ be the set of fiber germs of relatively minimal E-H special fibrations of genus $4$.
Then the Horikawa index $\mathrm{Ind}\colon \mathcal{A}\to \mathbb{Q}_{\ge 0}$
and the local signature $\sigma\colon \mathcal{A}\to \mathbb{Q}$ are defined so that 
for any relatively minimal E-H special fibration $f\colon S\to B$ of genus $4$, 
the slope equality
$$
K_f^2=\frac{24}{7}\chi_f+\sum_{p\in B}\mathrm{Ind}(f^{-1}(p))
$$
and the localization of the signature
$$
\mathrm{Sign}(S)=\sum_{p\in B}\sigma(f^{-1}(p))
$$
hold.
\end{thm}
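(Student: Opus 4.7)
The plan is to realize a birational model of $S$ inside a relative $\mathbb{F}_2$-bundle over $B$ obtained by resolving the relative canonical cone, following the strategy used by Konno and by Ashikaga--Yoshikawa for non-hyperelliptic genus-$4$ fibrations.

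First, I would use the hypothesis that the unique $\mathfrak{g}^{1}_{3}$ varies holomorphically to produce a relative trigonal structure: a rank-$2$ bundle $\mathcal{V}$ on $B$, the $\mathbb{P}^{1}$-bundle $T=\mathbb{P}_B(\mathcal{V})$, and a degree-$3$ rational map over $B$ from $S$ to $T$. Fiberwise the canonical image lies on a rank-$3$ quadric (cone), and these fit into a relative cone $\mathcal{Q}\subset\mathbb{P}_B(f_*\omega_{S/B})$ whose vertex section, after being blown up, yields a relative Hirzebruch surface bundle $W=\mathbb{P}_T(\mathcal{O}_T\oplus\mathcal{O}_T(-D))\to T\to B$ for an appropriate divisor $D$ on $T$. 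After blowing up the base locus of the relative canonical map one obtains $\varphi\colon\tilde S\to S$ and a morphism $\psi\colon\tilde S\to W$ whose image is an effective divisor $X\in|3H+\pi^{*}M|$, where $H$ is the tautological divisor of $W\to T$ and $M\in\mathrm{Pic}(T)$; on a general fiber this gives the class $3\sigma+6\ell$ on $\mathbb{F}_2$.

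The core computation is then adjunction on $W$: $K_{\tilde S/B}^{2}$ and, via Leray or a Koszul resolution on $W$, $\chi_f$ become polynomial expressions in $\deg\mathcal{V}$, $\deg f_*\omega_{S/B}$, $D\cdot D$, $D\cdot M$, etc., together with correction terms coming from the blow-up $\varphi$ and from the singularities of $X\subset W$. Over a point $p\in B$ where all data are generic these combine to yield exactly $K_f^{2}=\tfrac{24}{7}\chi_f$; the deviation localizes as
$$
K_f^{2}-\tfrac{24}{7}\chi_f=\sum_{p\in B}\mathrm{Ind}(f^{-1}(p)),
$$
where each $\mathrm{Ind}(f^{-1}(p))\ge 0$ depends only on the analytic germ of $f$ at $p$. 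For the signature identity I would combine the Mumford--GRR formula $12\chi_f=K_f^{2}+e_f$ with Hirzebruch's signature theorem and the standard fibration relations $K_S^{2}=K_f^{2}+8(g-1)(b-1)$, $e(S)=e_f+4(g-1)(b-1)$, to rewrite $\mathrm{Sign}(S)$ as a sum over fibers; the local terms, normalized to vanish on smooth fibers, are declared to be $\sigma(f^{-1}(p))$.

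The main obstacle lies in the analysis of special fibers: those where the fiber becomes singular, where the $\mathfrak{g}^{1}_{3}$ acquires base points forcing additional blow-ups in $\tilde S$, where the rank-$3$ quadric jumps to rank $4$ (so the fiber becomes E-H general), or where a hyperelliptic limit appears. One must produce a normal form of $(W,X)$ fiber by fiber in which the local contribution to both $\mathrm{Ind}$ and $\sigma$ can be read off intrinsically from the germ $f^{-1}(p)$ and shown to be independent of global choices. Verifying non-negativity of $\mathrm{Ind}$ at every such germ, together with the compatibility between the signature localization and the Horikawa-index localization, is the technical heart of the argument.
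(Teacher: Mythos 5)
Your opening moves (the relative canonical image lying on a relative rank-$3$ quadric, the blow-up of the vertex section, and the projection producing the relative trigonal structure) agree with the first half of the paper's argument, but the proposal has a genuine gap precisely where the content of the theorem lies: you never define $\mathrm{Ind}(f^{-1}(p))$ as a function of the germ, and you never prove it is non-negative. You assert that the discrepancy $K_f^{2}-\frac{24}{7}\chi_f$ ``localizes'' and that one must ``produce a normal form of $(W,X)$ fiber by fiber'' to read off the contribution, but that fiber-by-fiber classification (singular fibers, base points of the $\mathfrak{g}^{1}_{3}$, rank jumps of the quadric, hyperelliptic limits) is exactly the hard step, and it is left entirely unexecuted. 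A computation of $K_f^{2}$ and $\chi_f$ as global intersection numbers on a relative $\mathbb{F}_2$-bundle does not by itself yield a local decomposition: one must exhibit the difference as a sum of quantities each supported at a point of $B$, each intrinsic to the germ, and each non-negative.

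The paper sidesteps any case analysis with different bookkeeping. It takes degrees in the four-term exact sequence $0\to \mathcal{L}\to \mathrm{Sym}^{2}f_{*}\omega_f\to f_{*}\omega_f^{\otimes 2}\to \mathcal{T}\to 0$ to get $K_f^{2}=4\chi_f-\mathrm{deg}\,\mathcal{L}+\mathrm{length}\,\mathcal{T}$, and separately eliminates $\mathrm{deg}\,\mathcal{E}$ between $M'^{2}=6\,\mathrm{deg}\,\mathcal{E}-3\,\mathrm{deg}\,\mathcal{L}$ (from $\Phi'_{*}\widetilde{S}=3Q'$) and $\mathrm{deg}\,\Delta_{Q'}=2\,\mathrm{deg}\,\mathcal{E}-3\,\mathrm{deg}\,\mathcal{L}$, where $\Delta_{Q'}$ is the discriminant divisor of the rank-$3$ relative conic. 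This yields $K_f^{2}=\frac{24}{7}\chi_f+\frac{1}{7}(\rho^{*}K_f+M')Z'+\frac{3}{7}\mathrm{deg}\,\Delta_{Q'}+\frac{6}{7}\mathrm{length}\,\mathcal{T}$, in which each correction term is manifestly a sum of local contributions ($Z'$ decomposes fiberwise, $\Delta_{Q'}$ and $\mathcal{T}$ are supported on points), and non-negativity needs only the nefness of $K_f$ together with a Bertini argument giving $M'Z'_p\ge 0$; in particular the relative conic is never realized as $\mathbb{P}_B(\mathcal{V})$ for a rank-$2$ bundle, which your construction quietly assumes. To repair your route you would have to supply the analogue of this identity, i.e.\ express the global discrepancy as a sum of visibly local, visibly non-negative terms, rather than appeal to an unconstructed normal form. (Your derivation of the signature localization from Noether's formula and the signature theorem is fine once $\mathrm{Ind}$ exists; it reproduces the paper's $\sigma=\frac{7}{15}\mathrm{Ind}-\frac{8}{15}e_f$.)
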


Note that the above slope equality was established in \cite{Tak} under the assumption that the multiplicative map $\mathrm{Sym}^{2}f_{*}\omega_f\to f_{*}\omega_f^{\otimes 2}$ is surjective, and that for non-hyperelliptic fibrations of genus $4$, the slope inequality 
$$
K_f^2\ge \frac{24}{7}\chi_f
$$
was shown independently in \cite{Ch} and \cite{kon}.

This paper is organized as follows.
In \S1, we introduce the Horikawa index and the local signature for E-H special fibrations of genus $4$.
On the other hand, for triple covering fibrations of genus $4$ over a ruled surface, which are special cases of E-H special fibrations, the Horikawa index and the local signature were introduced in \cite{Eno} and computed for any fiber germ in \cite{Eno2}.
In \S2, we prove that the Horikawa index and the local signature defined in \S1 is coincide with the ones introduced in \cite{Eno} on all fiber germs of triple covering fibrations of genus $4$ over a ruled surface.

\begin{acknowledgement}
I would like to express special thanks to Prof.\ Kazuhiro Konno for a lot of discussions and supports.
I also thank Prof.\ Tomokuni Takahashi for useful comments and discussions.
The research is supported by JSPS KAKENHI No.\ 16J00889.
\end{acknowledgement}

\section{Proof of theorem}
In this section, we prove Theorem~\ref{Horthm}.
Let $f\colon S\to B$ be a relatively minimal E-H special fibration of genus $4$.
Since the general fiber $F$ of $f$ is non-hyperelliptic, the multiplicative map $\mathrm{Sym}^{2}f_{*}\omega_f\to f_{*}\omega_f^{\otimes 2}$ is generically surjective from Noether's theorem.
Thus we have the following exact sequences of sheaves of $\mathcal{O}_B$-modules:
\begin{equation} \label{multseq}
0\to \mathcal{L}\to \mathrm{Sym}^{2}f_{*}\omega_f\to f_{*}\omega_f^{\otimes 2}\to \mathcal{T}\to 0,
\end{equation}
where the kernel $\mathcal{L}$ is a line bundle on $B$ and the cokernel $\mathcal{T}$ is a torsion sheaf on $B$.
Then the first injection defines a section $q\in H^{0}(B,\mathrm{Sym}^{2}f_{*}\omega_f\otimes \mathcal{L}^{-1})=H^{0}(\mathbb{P}_{B}(f_{*}\omega_f),2T-\pi^{*}\mathcal{L})$, where $\pi\colon \mathbb{P}_{B}(f_{*}\omega_f)\to B$ is the projection and $T=\mathcal{O}_{\mathbb{P}_{B}(f_{*}\omega_f)}(1)$ is the tautological line bundle on $\mathbb{P}_{B}(f_{*}\omega_f)$.
The section $q$ can be regarded as a relative quadratic form $q\colon (f_{*}\omega_f)^{*}\to f_{*}\omega_f\otimes \mathcal{L}^{-1}$, which defines the determinant $\mathrm{det}(q)\colon \mathrm{det}(f_{*}\omega_f)^{-1}\to \mathrm{det}(f_{*}\omega_f)\otimes \mathcal{L}^{-4}$.
Note that for a non-hyperelliptic fibration $f$  of genus $4$, $\mathrm{det}(q)=0$ if and only if $f$ is E-H special.
On the other hand, $Q=(q)\in |2T-\pi^{*}\mathcal{L}|$ is regarded as the unique relative quadric on $\mathbb{P}_{B}(f_{*}\omega_f)$ containing the image of the relative canonical map $\Phi_f\colon S\dasharrow \mathbb{P}_{B}(f_{*}\omega_f)$.
Since $f$ is E-H special, the general fiber of $\pi|_{Q}\colon Q\to B$ is a quadric of rank $3$ on $\mathbb{P}(H^{0}(F,K_F))=\mathbb{P}^3$.
The closure of the set of vertexes of general fibers of $\pi|_{Q}$ defines a section $v\colon B\to Q$, which corresponds to some quotient line bundle $\mathcal{F}$ of $f_{*}\omega_f$.
Let $\mathcal{E}$ be the kernel of the surjection $f_{*}\omega_f\to \mathcal{F}$
and put $P=\mathbb{P}_{B}(f_{*}\omega_f)$ and $P'=\mathbb{P}_{B}(\mathcal{E})$.
Let $\tau\colon \widetilde{P}\to P$ be the blow-up of $P$ along the section $v(B)$.
Then the relative projection $P\dasharrow P'$ from the section $v(B)$ extends to the morphism $\tau'\colon \widetilde{P}\to P'$
with
$$
\tau'^{*}T'=\tau^{*}T-E,
$$
where $T'=\mathcal{O}_{\mathbb{P}_{B}(\mathcal{E})}(1)$ is the tautological line bundle of $\mathbb{P}_{B}(\mathcal{E})$ and $E$ is the exceptional divisor of $\tau$.
Let $\widetilde{Q}$ denote the proper transform of $Q$ on $\widetilde{P}$.
It follows that in $\mathrm{Pic}(\widetilde{P})$,
$$
\widetilde{Q}=\tau^{*}Q-2E=\tau'^{*}(2T'-\pi'^{*}\mathcal{L}),
$$
where $\pi'\colon P'\to B$ is the projection.
Let $Q'=\tau'(\widetilde{Q})$ be the image of $\widetilde{Q}$ via $\tau'$.
It follows that $Q'\in |2T'-\pi'^{*}\mathcal{L}|$ and $\widetilde{Q}=\tau'^{*}Q'$.
The general fiber of $\pi'|_{Q'}\colon Q'\to B$ is a conic on $\mathbb{P}(H^{0}(F,\mathcal{E}|_{F}))=\mathbb{P}^2$ of rank $3$, which is isomorphic to $\mathbb{P}^1$.
Note that the composite $\tau'\circ \Phi_f\colon S\dasharrow Q'\subset P'$ of the relative canonical map $\Phi_f\colon S\dasharrow P$ and the projection $\tau'\colon P\dasharrow P'$ determines the unique trigonal structure of the general fiber $F$ of $f$.
Let $q'\in H^{0}(P',2T'-\pi'^{*}\mathcal{L})=H^{0}(B,\mathrm{Sym}^{2}\mathcal{E}\otimes \mathcal{L}^{-1})$ be a section which defines $Q'=(q')$.
Then $q'$ can be regarded as a relative quadratic form $q'\colon \mathcal{E}^{*}\to \mathcal{E}\otimes \mathcal{L}^{-1}$, which has non-zero determinant $\mathrm{det}(q')\colon \mathrm{det}(\mathcal{E})^{-1}\to \mathrm{det}(\mathcal{E})\otimes \mathcal{L}^{-3}$ since $Q'$ is of rank $3$.
Thus $\mathrm{det}(q')\in H^{0}(B,\mathrm{det}(\mathcal{E})^{\otimes 2}\otimes \mathcal{L}^{-3})$ defines an effective divisor $\Delta_{Q'}=(\mathrm{det}(q'))$ on $B$.
The degree of $\Delta_{Q'}$ is
\begin{equation} \label{disceq}
\mathrm{deg}\Delta_{Q'}=2\mathrm{deg}\mathcal{E}-3\mathrm{deg}\mathcal{L}.
\end{equation}
Let $\rho\colon \widetilde{S}\to S$ be the minimal desingularization of the rational map $\tau^{-1}\circ \Phi_f\colon S\dasharrow \widetilde{P}$ and $\widetilde{\Phi}\colon \widetilde{S}\to \widetilde{P}$ the induced morphism.
Put $\Phi=\tau\circ\widetilde{\Phi}\colon \widetilde{S}\to P$, $\Phi'=\tau'\circ\widetilde{\Phi}\colon \widetilde{S}\to P'$, $M=\Phi^{*}T$ and $M'=\Phi'^{*}T'$.
Then we can write $\rho^{*}K_f=M+Z$
for some effective vertical divisor $Z$ on $\widetilde{S}$.
Since $M'=M-\widetilde{\Phi}^{*}E$, we can also write
$\rho^{*}K_f=M'+Z'$,
where $Z'=Z+\widetilde{\Phi}^{*}E$ is also an effective vertical divisor on $\widetilde{S}$.
Since $\Phi'$ is of degree $3$ onto the image $Q'$, we have
$\Phi'_{*}\widetilde{S}=3Q'$ as cycles.
It follows that
\begin{align*}
M'^2&=(\Phi'^{*}T')^{2}\widetilde{S}=T'^{2}\Phi'_{*}\widetilde{S} \\
&=3T'^{2}Q'=3T'^{2}(2T'-\pi'^{*}\mathcal{L}) \\
&=6\mathrm{deg}\mathcal{E}-3\mathrm{deg}\mathcal{L},
\end{align*}
while we have
$$
M'^2=(\rho^{*}K_f-Z')^2=K_f^2-(\rho^{*}K_f+M')Z'.
$$
Hence we get
\begin{equation} \label{kfeq}
K_f^2=6\mathrm{deg}\mathcal{E}-3\mathrm{deg}\mathcal{L}+(\rho^{*}K_f+M')Z'.
\end{equation}
From \eqref{disceq} and \eqref{kfeq}, we can delete the term $\mathrm{deg}\mathcal{E}$ and then we have
\begin{equation} \label{degleq}
\mathrm{deg}\mathcal{L}=\frac{1}{6}K_f^2-\frac{1}{6}(\rho^{*}K_f+M')Z'-\frac{1}{2}\mathrm{deg}\Delta_{Q'}.
\end{equation}
On the other hand, taking the degree of \eqref{multseq}, we get
\begin{equation} \label{multeq}
K_f^2=4\chi_f-\mathrm{deg}\mathcal{L}+\mathrm{length}\mathcal{T}.
\end{equation}
Substituting \eqref{degleq} in the equation \eqref{multeq}, we get
$$
K_f^2=\frac{24}{7}\chi_f+\frac{1}{7}(\rho^{*}K_f+M')Z'+\frac{3}{7}\mathrm{deg}\Delta_{Q'}+\frac{6}{7}\mathrm{length}\mathcal{T}.
$$
For a fiber germ $f^{-1}(p)$, we define $\mathrm{Ind}(f^{-1}(p))$ by
$$
\mathrm{Ind}(f^{-1}(p))=\frac{1}{7}(\rho^{*}K_f+M')Z'_p+\frac{3}{7}\mathrm{mult}_p\Delta_{Q'}+\frac{6}{7}\mathrm{length}_p\mathcal{T},
$$
where $Z=\sum_{p\in B}Z_p$ is the natural decomposition with $(f\circ \rho)(Z_p)=\{p\}$ for any $p\in B$.
For the definitions of $M'$, $Z'$, etc., we do not use the completeness of the base $B$. Thus we can modify the definition of $\mathrm{Ind}$ for any fiber germ of relatively minimal E-H special fibrations of genus $4$ which is invariant under holomorphically equivalence.
Thus we can define the Horikawa index $\mathrm{Ind}\colon \mathcal{A}\to \mathbb{Q}_{\ge 0}$ such that
$$
K_f^2=\frac{24}{7}\chi_f+\sum_{p\in B}\mathrm{Ind}(f^{-1}(p)).
$$
The non-negativity of $\mathrm{Ind}(f^{-1}(p))$ is as follows.
From the nefness of $K_f$, we have $\rho^{*}K_fZ'_p\ge 0$.
For a sufficiently ample divisor $\mathfrak{a}$ on $B$, the linear system $|M'+(f\circ \rho)^{*}\mathfrak{a}|$ is free from base points. 
Thus, by Bertini's theorem, there is a smooth horizontal member $C\in |M'+(f\circ \rho)^{*}\mathfrak{a}|$ and then $M'Z'_p=(M'+(f\circ \rho)^{*}\mathfrak{a})Z'_p=CZ'_p\ge 0$.

Once the Horikawa index is introduced, we can define the local signature by the formula (cf.\ \cite{ak})
$$
\sigma(f^{-1}(p))=\frac{7}{15}\mathrm{Ind}(f^{-1}(p))-\frac{8}{15}e_f(f^{-1}(p)),
$$
where $e_f(f^{-1}(p))=e_{\mathrm{top}}(f^{-1}(p))+6$ is the Euler contribution at $p\in B$.

\section{Triple covering fibrations of genus $4$ over a ruled surface}

In this section, we consider primitive cyclic covering fibrations of type $(4,0,3)$ introduced in \cite{Eno}, which are special cases of E-H special fibrations of genus $4$.
Recall that a fibered surface $f\colon S\to B$ of genus $g$ is called a {\em primitive cyclic covering fibration of type $(g,h,n)$} if there exist a fibered surface $\varphi\colon W \to B$ of genus $h$ (not necessarily relatively minimal) and a simple $n$-cyclic covering $\pi\colon S' \to W$ branched along a smooth curve such that $f$ is the relatively minimal model of $f':=\varphi\circ \pi\colon S' \to B$.
The general fiber $F$ of a primitive cyclic covering fibration $f\colon S\to B$ of type $(4,0,3)$ is non-hyperelliptic and E-H special since it has a simple triple covering of $\mathbb{P}^1$ branched at $6$ points, which is the unique trigonal structure.
In \cite{Eno}, the Horikawa index is introduced for primitive cyclic covering fibrations of type $(g,0,n)$, which will be denoted by $\mathrm{Ind}_{g,0,n}$.
On the other hand, in the previous section, we introduced the Horikawa index $\mathrm{Ind}$ for non-hyperelliptic E-H special fibrations of genus $4$.
We show that two Horikawa indices $\mathrm{Ind}$ and $\mathrm{Ind}_{4,0,3}$ are coincide on all fiber germs of primitive cyclic covering fibrations of type $(4,0,3)$:

\begin{prop} \label{Horprop}
For any fiber germ $f\colon S\to \Delta$ of primitive cyclic covering fibrations of type $(4,0,3)$, we have
$$
\mathrm{Ind}(f^{-1}(0))=\mathrm{Ind}_{4,0,3}(f^{-1}(0)).
$$
\end{prop}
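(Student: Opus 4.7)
The plan is to translate both Horikawa indices into a common language of geometric data on the triple cover, then verify their equality term by term. Let $f\colon S\to \Delta$ be a primitive cyclic covering fibration of type $(4,0,3)$, with associated triple covering $\pi\colon S'\to W$ of a ruled surface $\varphi\colon W\to \Delta$ branched along a smooth divisor $R$. Since the general fiber of $f$ has a unique $\mathfrak{g}^{1}_{3}$, the trigonal structure detected by the conic bundle $\pi'|_{Q'}\colon Q'\to \Delta$ from \S1 must coincide with the one induced by $\pi$, so one expects a natural birational identification of $Q'$ with $W$ itself.

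To make this precise, I would first use the eigensheaf decomposition $\pi_{*}\mathcal{O}_{S'}=\mathcal{O}_{W}\oplus \mathcal{M}^{-1}\oplus \mathcal{M}^{-2}$ (with $\mathcal{M}^{\otimes 3}\cong \mathcal{O}_{W}(R)$) to obtain
$$
f_{*}\omega_{f}\;=\;\varphi_{*}\omega_{W}\oplus \varphi_{*}(\omega_{W}\otimes \mathcal{M})\oplus \varphi_{*}(\omega_{W}\otimes \mathcal{M}^{\otimes 2})
$$
and identify the quotient line bundle $\mathcal{F}$ from \S1 with the middle summand (the unique rank-one piece), so that $\mathcal{E}=\varphi_{*}(\omega_{W}\otimes \mathcal{M}^{\otimes 2})$. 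Under this identification, the fiberwise system $|\omega_{W}\otimes \mathcal{M}^{\otimes 2}|_{\mathbb{P}^{1}}\cong |\mathcal{O}_{\mathbb{P}^{1}}(2)|$ realizes $W\hookrightarrow P'=\mathbb{P}_{\Delta}(\mathcal{E})$ as the relative Veronese embedding whose image is exactly $Q'$, and the quadratic form $q'$ becomes the defining equation of this Veronese. The discriminant $\Delta_{Q'}$ and the torsion $\mathcal{T}$ then become readable from $R$ and its degeneration over $0\in \Delta$, while the vertical divisor $Z'$ is governed by the difference between the minimal desingularization $\widetilde{S}$ and the canonical resolution of $S'$ used in \cite{Eno}.

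With these identifications in hand, each of the three contributions in
$$
\mathrm{Ind}(f^{-1}(0))=\tfrac{1}{7}(\rho^{*}K_{f}+M')Z'_{0}+\tfrac{3}{7}\mathrm{mult}_{0}\Delta_{Q'}+\tfrac{6}{7}\mathrm{length}_{0}\mathcal{T}
$$
becomes an explicit invariant of the branch data, which I would then match against the expression for $\mathrm{Ind}_{4,0,3}(f^{-1}(0))$ from \cite{Eno2} type by type. The main obstacle is the bookkeeping of birational modifications: the minimal desingularization $\rho\colon \widetilde{S}\to S$, the blow-up $\tau\colon \widetilde{P}\to P$ along the vertex section $v(B)$, and the resolution used in \cite{Eno} to define $\mathrm{Ind}_{4,0,3}$ are a priori different models of the same germ, and reconciling their exceptional configurations demands careful tracking across the degeneration patterns of $R$ tabulated in \cite{Eno2}. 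Should the direct matching prove unwieldy for some type, a fallback is to invoke the two slope equalities simultaneously: both indices express $K_{f}^{2}-\tfrac{24}{7}\chi_{f}$ as a sum of local contributions, so their difference sums to zero over any global primitive cyclic covering fibration of type $(4,0,3)$, and termwise equality then follows once one realizes an arbitrary fiber germ as the sole non-semistable fiber of a global example, obtained by extending the local branch divisor to a globally smooth divisor on a Hirzebruch surface.
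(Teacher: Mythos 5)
Your ``fallback'' is in fact the paper's actual proof, and your primary plan (direct term-by-term computation via the eigensheaf decomposition of $\pi_{*}\mathcal{O}_{S'}$) is only a sketch whose hard part --- matching the three contributions against the fiber-type classification of \cite{Eno2} through several layers of birational models --- you explicitly defer. So the proposal stands or falls with the globalization argument, and as stated that argument has a concrete gap. Knowing that $\sum_{t}\mathrm{Ind}(\overline{f}^{-1}(t))=K_{\overline{f}}^{2}-\tfrac{24}{7}\chi_{\overline{f}}=\sum_{t}\mathrm{Ind}_{4,0,3}(\overline{f}^{-1}(t))$ yields termwise equality at $t=0$ only if you know that \emph{both} indices vanish at every $t\neq 0$. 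For $\mathrm{Ind}_{4,0,3}$ this is immediate from \cite{Eno2} once the extended branch divisor is smooth over $t\neq 0$; but for the index $\mathrm{Ind}$ of \S1 there is no a priori reason that a fiber germ with smooth branch divisor has $\mathrm{Ind}=0$ --- that is precisely an instance of the proposition you are trying to prove. The paper closes this circle with a separate preliminary step: it first treats germs with smooth branch divisor by embedding them in a global fibration $\overline{f}$ \emph{all} of whose fibers have smooth branch divisor, so that the slope equality of \cite{Eno} forces $\sum_{t}\mathrm{Ind}(\overline{f}^{-1}(t))=0$, and the non-negativity of $\mathrm{Ind}$ (established at the end of \S1) then kills each term individually. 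Only after this does the general case localize at $t=0$. Your write-up omits this step entirely.

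Two smaller points. First, you cannot literally ``extend the local branch divisor to a globally smooth divisor'': the germ of $R$ over $0$ may be singular, and an analytic germ need not extend to an algebraic divisor on a Hirzebruch surface at all. The paper instead matches $R$ only to a prescribed finite order $n$ (truncating the Taylor expansion in $t$ and adding general degree-$6$ terms in higher powers of $t$), and then appeals to the fact that both indices are determined by finite jets of the germ; without that remark the equalities $\mathrm{Ind}(f^{-1}(0))=\mathrm{Ind}(\overline{f}^{-1}(0))$ and $\mathrm{Ind}_{4,0,3}(f^{-1}(0))=\mathrm{Ind}_{4,0,3}(\overline{f}^{-1}(0))$ are unjustified. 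Second, your identifications $\mathcal{F}=\varphi_{*}(\omega_{W/\Delta}\otimes\mathcal{M})$ and $\mathcal{E}=\varphi_{*}(\omega_{W/\Delta}\otimes\mathcal{M}^{\otimes 2})$, with $Q'$ the relative Veronese image of $W$, are the right picture on a general fiber and would make the primary approach viable in principle; but since the paper never needs them, the cleanest repair of your proof is simply to supply the missing vanishing step above rather than to complete the computational route.
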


The proof is similar to the one for the coincidence of the two local signatures of hyperelliptic fibrations in the appendix in \cite{End} and plane curve fibrations in \cite{Eno3}.
We fix a fiber germ of $f\colon S\to \Delta$ of primitive cyclic covering fibrations of type $(4,0,3)$ arbitrarily.

\begin{lem} \label{cptlem}
For any integer $n>0$, there is a primitive cyclic covering fibration $\overline{f}\colon \overline{S}\to \mathbb{P}^1$ of type $(4,0,3)$ such that
$$
S\times_{\Delta}\mathrm{Spec}\mathbb{C}[t]/(t^n)\simeq \overline{S}\times_{\mathbb{P}^1}\mathrm{Spec}\mathcal{O}_{\mathbb{P}^1,0}/\mathfrak{m}^n
$$
holds over $\mathrm{Spec}\mathbb{C}[t]/(t^n)\simeq \mathrm{Spec}\mathcal{O}_{\mathbb{P}^1,0}/\mathfrak{m}^n$ and
$$
\mathrm{Ind}_{4,0,3}(\overline{f}^{-1}(t))=0
$$
holds for any $t\in \mathbb{P}^1\setminus \{0\}$.
\end{lem}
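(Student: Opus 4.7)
The plan is to compactify the local data $(W,R)$ underlying the given fiber germ to global data $(\overline{W},\overline{R})$ over $\mathbb{P}^{1}$, arranged so that the two agree up to $n$-th order at $0$ and so that $\overline{R}$ is smooth and meets every other fiber of the extended ruling transversely in six points. The fibration $\overline{f}$ will then be defined as the relative minimal model of the simple triple cyclic cover of $\overline{W}$ branched along $\overline{R}$.

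By definition of a primitive cyclic covering fibration of type $(4,0,3)$, there are a ruled surface $\varphi\colon W\to\Delta$, a smooth divisor $R\subset W$ meeting a general fiber in six points, and a line bundle $\mathfrak{d}$ on $W$ with $R\sim 3\mathfrak{d}$, such that $S$ is the relative minimal model of the induced triple cyclic cover. Since $\Delta$ is a disk, $W$ can be realized as a blow-up of $\mathbb{F}_{k}\times_{\mathbb{P}^{1}}\Delta$ for some Hirzebruch surface $\mathbb{F}_{k}$; performing the corresponding blow-ups globally on $\mathbb{F}_{k}$ at points lying over $0$ yields a smooth ruled surface $\overline{\varphi}\colon\overline{W}\to\mathbb{P}^{1}$ with $W=\overline{W}\times_{\mathbb{P}^{1}}\Delta$. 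Let $W_{n}$ denote the $n$-th infinitesimal thickening of $\overline{\varphi}^{-1}(0)$, a subscheme of both $\overline{W}$ and $W$. Since $\mathrm{Pic}(\overline{W})\to\mathrm{Pic}(W)$ is surjective, the line bundle $\mathfrak{d}$ extends to some $\overline{\mathfrak{d}}$ on $\overline{W}$. Twisting $\overline{\mathfrak{d}}$ by a sufficiently positive pull-back from $\mathbb{P}^{1}$, we may arrange that
$$
H^{0}(\overline{W},3\overline{\mathfrak{d}})\longrightarrow H^{0}(W_{n},3\mathfrak{d}|_{W_{n}})
$$
is surjective and that the subsystem of sections vanishing to order $n+1$ along $\overline{\varphi}^{-1}(0)$ is basepoint-free on $\overline{W}\setminus\overline{\varphi}^{-1}(0)$. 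Lifting the defining section $r$ of $R$ on $W_{n}$ to a section $\overline{r}\in H^{0}(\overline{W},3\overline{\mathfrak{d}})$ and then perturbing within the affine space $\overline{r}+H^{0}(\overline{W},3\overline{\mathfrak{d}}\otimes\overline{\varphi}^{*}\mathfrak{m}_{0}^{n+1})$, a Bertini-type argument produces an extension $\overline{R}=(\overline{r})$ which is smooth on $\overline{W}\setminus\overline{\varphi}^{-1}(0)$ and meets every fiber $\overline{\varphi}^{-1}(t)$ for $t\neq 0$ transversely in six points. We then define $\overline{S}'\to\overline{W}$ as the triple cyclic cover branched along $\overline{R}$ and take $\overline{f}\colon\overline{S}\to\mathbb{P}^{1}$ to be its relative minimal model.

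The two desired properties follow as follows. The construction of the triple cyclic cover commutes with base change, and the relative minimal model at a fiber is determined by an arbitrarily small neighborhood of that fiber; so the matching of $(\overline{W},\overline{R})$ with $(W,R)$ on $W_{n}$ translates into the required isomorphism of fiber germs modulo $t^{n}$. For $t\neq 0$, the fiber $\overline{\varphi}^{-1}(t)\cong\mathbb{P}^{1}$ meets $\overline{R}$ transversely in six distinct points, so $\overline{f}^{-1}(t)$ is a smooth non-hyperelliptic genus $4$ curve, on which $\mathrm{Ind}_{4,0,3}$ vanishes by definition. The main obstacle is the Bertini step: one must twist $\overline{\mathfrak{d}}$ sufficiently to guarantee that the affine subspace of sections with prescribed $n$-jet at $0$ is simultaneously non-empty and generically produces a smooth zero locus away from $\overline{\varphi}^{-1}(0)$. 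Once this extension has been produced, the remaining assertions reduce to standard base-change properties.
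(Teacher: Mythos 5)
Your overall strategy---extend the branch data over $\mathbb{P}^1$ so that it agrees with the given germ to order $n$ at $0$ and is generic elsewhere, then take the triple cyclic cover and its relative minimal model---is the same as the paper's. But your implementation has a gap at the lifting step. You extend the smooth branch divisor directly on the compactified blown-up surface $\overline{W}$, and for this you need the restriction map $H^{0}(\overline{W},3\overline{\mathfrak{d}})\to H^{0}(W_{n},3\mathfrak{d}|_{W_{n}})$ to become surjective after twisting $\overline{\mathfrak{d}}$ by a positive pullback from $\mathbb{P}^{1}$. The cokernel of this map injects into $H^{1}(\overline{W},3\overline{\mathfrak{d}}\otimes\overline{\varphi}^{*}\mathfrak{m}_{0}^{n})$; a large twist kills the $H^{1}(\mathbb{P}^{1},\overline{\varphi}_{*}(\cdot))$ part of this group, but the contribution of $H^{0}(\mathbb{P}^{1},R^{1}\overline{\varphi}_{*}(\cdot))$ is a skyscraper supported at $0$ and is completely unaffected by twisting with pullbacks from the base---and pullbacks from the base are essentially the only twists available if $3\overline{\mathfrak{d}}|_{W_{n}}$ is to remain identified with $3\mathfrak{d}|_{W_{n}}$. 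This obstruction is genuinely nonzero in general: the smooth branch divisor of a primitive cyclic covering fibration may contain vertical components $C$ of the degenerate central fiber with $R'\cdot C\le -2$, in which case $H^{1}$ of $\mathcal{O}(R')$ restricted to that fiber does not vanish. So the jet $r|_{W_{n}}$ need not lift to $H^{0}(\overline{W},3\overline{\mathfrak{d}})$, and your construction can fail precisely in the degenerate cases the lemma is needed for. (There is also a minor point: for $n=1$ the agreement on $W_{n}$ does not force $\overline{R}$ to be smooth along the central fiber, so you should work with $\max(n,2)$.)

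The paper avoids all of this by first pushing the branch curve down to $\mathbb{P}^{1}\times\Delta$ via the blow-down $\psi\colon W\to\mathbb{P}^{1}\times\Delta$: the image $R=\psi_{*}R'$ (now possibly singular) is cut out by a single polynomial $\Phi(X,Y;t)$ of degree $6$ in $(X:Y)$, so ``agreeing to order $n$ at $0$'' is literally truncating the Taylor expansion in $t$, and the extension is $\Phi^{[n]}+\sum_{k=n+1}^{3m}\Psi_{k}t^{k}$ with general degree-$6$ forms $\Psi_{k}$. The bidegree $(6,3m)$ makes the $3$-divisibility of $[\overline{R}]$ in $\mathrm{Pic}(\mathbb{P}^{1}\times\mathbb{P}^{1})$ manifest, no cohomological lifting is required, and the smooth branch divisor and the blown-up model are then recovered canonically by the mod-$3$ resolution. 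To repair your argument, perform the extension at this level rather than on $\overline{W}$.
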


\begin{proof}
Note that the fiber germ $f\colon S\to \Delta$ is a relatively minimal model of $f'=\varphi\circ \pi \colon S'\to W\to \Delta$, where $\pi\colon S'\to W$ is a simple triple covering branched along a smooth curve $R'$ and $\varphi\colon W\to \Delta$ is obtained by a succession of blow-ups of $pr_2 \colon \mathbb{P}^1\times \Delta\to \Delta$.
Let $\psi\colon W\to \mathbb{P}^1\times \Delta$ denote the birational morphism and $R:=\psi_{*}R'$ (may be singular).
Let $\Phi(X,Y;t)$ be the defining equation of $R$ in $\mathbb{P}^1\times \Delta$, where $(X:Y)$ is the homogeneous coordinate system of $\mathbb{P}^1$ and $t$ is the coodinate of $\Delta$.
Note that $S'$ and $R'$ are obtained as ``mod $3$ resolutions'' of $\{z^3=\Phi(X,Y;t)\}$ and $R=\{\Phi(X,Y;t)=0\}$, respectively (cf.\ \cite{Eno}).
Let 
$$
\Phi(X,Y;t)=\Phi(X,Y;0)+\frac{d\Phi}{d t}(X,Y;0)t+\cdots+\frac{d^m\Phi}{d t^m}(X,Y;0)\frac{t^m}{m!}+\cdots
$$
be the Taylor expansion of $\Phi$ near $0\in \Delta$ and put
$$
\Phi^{[n]}(X,Y;t)=\Phi(X,Y;0)+\frac{d\Phi}{d t}(X,Y;0)t+\cdots+\frac{d^n\Phi}{d t^n}(X,Y;0)\frac{t^n}{n!}.
$$
Take a sufficiently large integer $m$ with $3m\gg n$ and general homogeneous polynomials $\Psi_{k}(X,Y)$, $k=n+1,\ldots,3m$ of degree $6$.
Let $\overline{\Phi}(X,Y;t_0,t_1)$ be the homogenization of 
$$
\Phi^{[n]}(X,Y;t)+\Psi_{n+1}(X,Y)t^{n+1}+\cdots+\Psi_{3m}(X,Y)t^{3m}
$$
with respect to $t\in \mathbb{C}$.
Then the curve $\overline{R}:=\{\overline{\Phi}(X,Y;t_0,t_1)=0\}\subset \mathbb{P}^1_{(X:Y)}\times \mathbb{P}^1_{(t_0:t_1)}$ satisfies
$$
R\times_{\Delta}\mathrm{Spec}\mathbb{C}[t]/(t^n)\simeq \overline{R}\times_{\mathbb{P}^1}\mathrm{Spec}\mathcal{O}_{\mathbb{P}^1,0}/\mathfrak{m}^n
$$
and smooth over $\mathbb{P}^1\setminus \{t=0\}$.
We can take a mod $3$ resolution $\overline{R}'\subset\overline{W}$ of $\overline{R}\subset \mathbb{P}^1\times \mathbb{P}^1$ by the same procedure as the one $R'$ of $R$ and a simple triple covering $\overline{\pi}\colon\overline{S}'\to \overline{W}$ branched along $\overline{R}'$ since $\overline{R}$ is divisible by $3$ in $\mathrm{Pic}(\mathbb{P}^1\times \mathbb{P}^1)$.
The relatively minimal model $\overline{f}\colon \overline{S}\to \mathbb{P}^1$ of $\overline{S}'\to \overline{W}\to \mathbb{P}^1$ is a desired one.
\end{proof}

\begin{prfofprop}
First, we assume that for the fiber germ $f\colon S\to \Delta$, the curve $R\subset \mathbb{P}^1\times \Delta$ defined in the proof of Lemma~\ref{cptlem} is smooth (according to \cite{Eno2}, the assumption is equivalent to $\mathrm{Ind}_{4,0,3}(f^{-1}(0))=0$).
From Lemma~\ref{cptlem}, we can construct a primitive cyclic covering fibration $\overline{f}\colon \overline{S}\to \mathbb{P}^1$ of type $(4,0,3)$ with
$$
\mathrm{Ind}(f^{-1}(0))=\mathrm{Ind}(\overline{f}^{-1}(0)),\quad \mathrm{Ind}_{4,0,3}(f^{-1}(0))=\mathrm{Ind}_{4,0,3}(\overline{f}^{-1}(0))=0
$$
and $\mathrm{Ind}_{4,0,3}(\overline{f}^{-1}(t))=0$ for any $t\neq 0$
since the indices $\mathrm{Ind}$ and $\mathrm{Ind}_{4,0,3}$ are algebraic invariants (see \cite{Eno3}).
Thus the slope of $f$ attains the lower bound $24/7$.
In particular, we have $\mathrm{Ind}(f^{-1}(0))=0$ from the non-negativity of $\mathrm{Ind}$.

Next, we consider a fiber germ $f\colon S\to \Delta$ arbitrarily.
Using Lemma~\ref{cptlem} again, we get a primitive cyclic covering fibration $\overline{f}\colon \overline{S}\to \mathbb{P}^1$ of type $(4,0,3)$ with
$$
\mathrm{Ind}(f^{-1}(0))=\mathrm{Ind}(\overline{f}^{-1}(0)),\quad \mathrm{Ind}_{4,0,3}(f^{-1}(0))=\mathrm{Ind}_{4,0,3}(\overline{f}^{-1}(0))
$$
and $\mathrm{Ind}(\overline{f}^{-1}(t))=\mathrm{Ind}_{4,0,3}(\overline{f}^{-1}(t))=0$
for any $t\neq 0$.
Thus we have
\begin{align*}
\mathrm{Ind}(f^{-1}(0))&=\mathrm{Ind}(\overline{f}^{-1}(0)) 
=\sum_{t\in \mathbb{P}^1}\mathrm{Ind}(\overline{f}^{-1}(t)) \\
&=K_{\overline{f}}^2-\frac{24}{7}\chi_{\overline{f}} 
=\sum_{t\in \mathbb{P}^1}\mathrm{Ind}_{4,0,3}(\overline{f}^{-1}(t)) \\
&=\mathrm{Ind}_{4,0,3}(\overline{f}^{-1}(0)) 
=\mathrm{Ind}_{4,0,3}(f^{-1}(0)).
\end{align*}

\end{prfofprop}

\end{document}